\providecommand{\U}[1]{\protect\rule{.1in}{.1in}}
\newtheorem{theorem}{Theorem}
\theoremstyle{plain}
\newtheorem{corollary}[theorem]{Corollary}
\newtheorem{definition}[theorem]{Definition}
\newtheorem{proposition}[theorem]{Proposition}
\numberwithin{equation}{section}
\begin{document}
\title{\textbf{Pseudoprimes stronger than strong pseudoprimes}}
\author[J.H. Castillo]{John H. Castillo}
\address{John H. Castillo, Departamento de Matemáticas y Estadística, Universidad de Nariño, San Juan de Pasto-Colombia}
\email{jhcastillo@gmail.com}
\author[G. García-Pulgarín]{Gilberto Garc\'\i a-Pulgar\'in}
\address{Gilberto Garc\'\i a-Pulgar\'in, Universidad de Antioquia, Medellín-Colombia}
\email{gigarcia@ciencias.udea.edu.co}
\author[J.M Velásquez-Soto]{Juan Miguel Vel\'asquez-Soto}
\address{Juan Miguel Vel\'asquez-Soto, Departamento de Matemáticas, Universidad del Valle, Cali-Colombia}
\email{jumiveso@univalle.edu.co} \subjclass[2000]{11A05, 11A07,
11A15, 11A63, 16U60}
\date{}

\begin{abstract}
We introduce a new class of pseudoprimes. In this work we
characterize Midy pseudoprimes, give some of their properties and
established interesting connections with other known pseudoprimes,
in particular we show that every divisor of a Midy pseudoprime is
either a prime or a Midy pseudoprime and in the last case it is a
strong pseudoprime.

\end{abstract}
\maketitle

\section{Midy's Property}

Let $b$ be a positive integer greater than $1$, $b$ will denote the
base of numeration, $N$ a positive integer relatively prime to $b$,
i.e $(N,b)=1$, $\left\vert b\right\vert _{N}$ the order of $b$ in
the multiplicative group $\mathbb{U}_{N}$ of positive integers less
than $N$ and relatively primes to $N,$ and $x\in\mathbb{U}_{N}$. It
is well known that when we write the fraction $\frac{x}{N}$ in base
$b$, it is periodic. By period we mean the smallest repeating
sequence of digits in base $b$ in such expansion, it is easy to see
that $\left\vert b\right\vert _{N}$ is the length of the
period of the fractions $\frac{x}{N}$ (see Exercise 2.5.9 in \cite{Nathanson}%
). Let $d,\,k$ be positive integers with $\left\vert b\right\vert
_{N}=dk$, $d>1$ and $\frac{x}{N}=0.\overline{a_{1}a_{2}\cdots
a_{\left\vert b\right\vert _{N}}}$ where the bar indicate the period
and $a_{i}$'s are digits in base $b$. We separate the period
${a_{1}a_{2}\cdots a_{\left\vert b\right\vert _{N}}}$ in $d$ blocks
of length $k$ and let
\[
A_{j}=[a_{(j-1)k+1}a_{(j-1)k+2}\cdots a_{jk}]_{b}%
\]
be the number represented in base $b$ by the $j$-th block and $S_{d}%
(x)=\sum\limits_{j=1}^{d}A_{j}$. If for all $x\in\mathbb{U}_{N}$,
the sum $S_{d}(x)$ is a multiple of $b^{k}-1$ we say that $N$ has
the Midy's property for $b$ and $d$. It is named after E. Midy
(1836), to read historical aspects about this property see
\cite{Lewittes} and its references.

We denote with $\mathcal{M}_{b}(N)$ the set of positive integers $d$
such that $N$ has the Midy's property for $b$ and $d$ and we will
call it the Midy's set of $N$ to base $b$. As usual, let
$\nu_{p}(N)$ be the greatest exponent of $p$ in the prime
factorization of $N$.

For example $13$ has the Midy's property to the base $10$ and $d=3$,
because $|13|_{10}=6$, $1/13=0.\overline{076923}$ and $07+69+23=99$.
Also, $75$ has the Midy's property to the base $8$ and $d=4$, since
$|75|_{8}=20$, $1/75=[0.\overline{00664720155164033235}]_8$ and
$[00664]_8+[72015]_8+[51640]_8+[33235]_8=2*(8^5-1)$. But $75$ does
not have the Midy's property to $8$ and $5$. Actually, we can see
that $\mathcal{M}_{10}(13)=\{2,3,6\}$ and $\mathcal{M}_{8}(75)=\{4,
20\}$.

In \cite{garcia09} is given the following characterization of Midy's
property.

\begin{theorem}
\label{ppl2} If $N$ is a positive integer and $\left\vert
b\right\vert _{N}=kd$, then $d\in\mathcal{M}_{b}(N)$ if and only if
$\nu_{p}(N)\leq\nu _{p}(d)$ for all prime divisor $p$ of $(b^{k}-1,\
N)$.
\end{theorem}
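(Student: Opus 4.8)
The plan is to first collapse the Midy condition into a single divisibility and then analyse that divisibility one prime at a time. Write $c=b^{k}$. For $x\in\mathbb{U}_{N}$ the purely periodic expansion gives $\frac{x}{N}=\frac{P_{x}}{b^{kd}-1}$, where $P_{x}=[a_{1}a_{2}\cdots a_{kd}]_{b}$ is the integer formed by the period; thus $P_{x}=x(b^{kd}-1)/N$, which is an integer because $N\mid b^{kd}-1$. Grouping the period into the $d$ blocks $A_{j}$ yields $P_{x}=\sum_{j=1}^{d}A_{j}b^{(d-j)k}$, and since $b^{k}\equiv1\pmod{b^{k}-1}$ we obtain $P_{x}\equiv\sum_{j=1}^{d}A_{j}=S_{d}(x)\pmod{b^{k}-1}$. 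Hence $d\in\mathcal{M}_{b}(N)$ if and only if $b^{k}-1\mid P_{x}$ for every $x\in\mathbb{U}_{N}$. Writing $P_{x}=x\cdot\frac{b^{kd}-1}{N}$ and using that $1\in\mathbb{U}_{N}$, the condition for all units is equivalent to the condition for $x=1$, so the Midy property reduces to the single divisibility
\[
N\mid\frac{b^{kd}-1}{b^{k}-1}=\frac{c^{d}-1}{c-1}=\sum_{i=0}^{d-1}c^{i}.
\]

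It then remains to prove that $N\mid\frac{c^{d}-1}{c-1}$ holds if and only if $\nu_{p}(N)\le\nu_{p}(d)$ for every prime $p\mid(c-1,N)=(b^{k}-1,N)$, which I would do by comparing $\nu_{p}$ on both sides for each prime $p\mid N$. If $p\nmid c-1$ then $\nu_{p}\!\left(\frac{c^{d}-1}{c-1}\right)=\nu_{p}(c^{d}-1)\ge\nu_{p}(N)$, the inequality holding because $N\mid c^{d}-1$ (indeed $c^{d}=b^{kd}\equiv1\pmod{N}$); such primes impose no constraint. So all of the content sits at the primes $p\mid(c-1,N)$, for which $c\equiv1\pmod p$.

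The crux is the exponent computation at these primes, and the tool is the Lifting-the-Exponent lemma. For odd $p$ with $c\equiv1\pmod p$ it gives $\nu_{p}(c^{d}-1)=\nu_{p}(c-1)+\nu_{p}(d)$, whence $\nu_{p}\!\left(\frac{c^{d}-1}{c-1}\right)=\nu_{p}(d)$, and the requirement $\nu_{p}(N)\le\nu_{p}\!\left(\frac{c^{d}-1}{c-1}\right)$ becomes exactly $\nu_{p}(N)\le\nu_{p}(d)$; assembling over all primes then proves the equivalence. I expect the main obstacle to be the prime $p=2$: there the even-exponent form of the lemma gives $\nu_{2}\!\left(\frac{c^{d}-1}{c-1}\right)=\nu_{2}(c+1)+\nu_{2}(d)-1$ for even $d$ rather than $\nu_{2}(d)$, so reducing the inequality to $\nu_{2}(N)\le\nu_{2}(d)$ needs separate verification; this is automatic whenever $2\nmid(b^{k}-1,N)$, in particular for odd $N$, which is the case relevant to the pseudoprimes studied here.
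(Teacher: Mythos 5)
The paper gives no proof of this theorem at all: it is quoted from \cite{garcia09}, so there is no internal argument to compare yours against, and I have to judge the proposal on its own. Your reduction is sound: the congruence $P_{x}\equiv S_{d}(x)\pmod{b^{k}-1}$ together with $P_{x}=xP_{1}$ correctly collapses the quantifier over $x\in\mathbb{U}_{N}$ to the single divisibility $N\mid\frac{b^{kd}-1}{b^{k}-1}$, primes $p\nmid b^{k}-1$ impose no constraint because $N\mid b^{kd}-1$, and for odd $p\mid(b^{k}-1,N)$ the lifting-the-exponent computation $\nu_{p}\bigl(\tfrac{c^{d}-1}{c-1}\bigr)=\nu_{p}(d)$ gives exactly the stated condition. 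Up to the prime $2$, this is a complete and clean proof.

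The $p=2$ case that you flag but do not finish is, however, a genuine gap, and it is worth knowing that it cannot be closed as the statement stands: for $2\mid(b^{k}-1,N)$ and $d$ even the true condition at $2$ is $\nu_{2}(N)\le\nu_{2}(b^{k}+1)+\nu_{2}(d)-1$, which agrees with $\nu_{2}(N)\le\nu_{2}(d)$ only when $b^{k}\equiv1\pmod 4$. Concretely, take $N=4$, $b=3$, $k=1$, $d=2$: then $1/4=[0.\overline{02}]_{3}$ and $3/4=[0.\overline{20}]_{3}$, both block sums equal $2$, a multiple of $b^{k}-1=2$, so $2\in\mathcal{M}_{3}(4)$; yet $(b^{k}-1,N)=2$ and $\nu_{2}(4)=2>1=\nu_{2}(2)$, so the quoted criterion says the opposite. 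Your analysis thus uncovers that the theorem needs either the hypothesis that $N$ is odd (which is all the present paper ever uses, since Midy pseudoprimes are defined to be odd) or a corrected exponent condition at the prime $2$. To turn your proposal into a complete proof you must do one of those two things explicitly rather than leaving the even prime as a remark.
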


The next theorem is a different way to write Theorem \ref{ppl2}.

\begin{theorem}
\label{ppl3}Let $N$ be a positive integer and $d$ a divisor of
$\left\vert b\right\vert _{N}$. \ The following statements are
equivalent

\begin{enumerate}
\item $d\in\mathcal{M}_{b}(N)$

\item For each prime divisor $p$ of $N$ such that $\nu_{p}\left(  N\right)
>\nu_{p}\left(  d\right)  $, there exists a prime $q$ divisor of $\left\vert
b\right\vert _{N}$ that satisfies $\nu_{q}\left(  \left\vert
b\right\vert _{p}\right)  >\nu_{q}\left(  \left\vert b\right\vert
_{N}\right)  -\nu _{q}\left(  d\right)  $.
\end{enumerate}
\end{theorem}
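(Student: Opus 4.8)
The plan is to show Theorem 2 and Theorem 3 are equivalent by translating the condition in Theorem 1 using a standard relationship between the order of $b$ modulo $N$ and the orders of $b$ modulo the prime power divisors of $N$. Since Theorem 3 is explicitly described as "a different way to write Theorem \ref{ppl2}," I would prove the equivalence of statement (2) with the condition $\nu_p(N) \le \nu_p(d)$ for all primes $p \mid (b^k - 1, N)$, working one prime $p \mid N$ at a time.

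Let me think about the key translation. Fix a prime $p \mid N$ with $\nu_p(N) > \nu_p(d)$; these are exactly the primes that can cause trouble in Theorem 1's condition. For such a $p$, I want to show that the Theorem 1 condition "$\nu_p(N) \le \nu_p(d)$ whenever $p \mid (b^k-1)$" fails for $p$ if and only if $p \mid b^k - 1$, which I should re-express in terms of orders. The crucial observation is $p \mid b^k - 1$ iff $|b|_p \mid k$, where $k = |b|_N / d$. So the obstruction in Theorem 1 occurs precisely when $|b|_p \mid |b|_N/d$. Negating this (to get statement (2), which asserts the obstruction does \emph{not} occur, i.e. $N$ does have the Midy property) means $|b|_p \nmid |b|_N/d$. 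The remaining work is to show $|b|_p \nmid |b|_N/d$ is equivalent to the condition in (2): there exists a prime $q \mid |b|_N$ with $\nu_q(|b|_p) > \nu_q(|b|_N) - \nu_q(d)$.

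\begin{claim}
\label{ordersplit}
For a prime $p \mid N$ and a divisor $d$ of $|b|_N$, writing $k = |b|_N/d$, we have $|b|_p \nmid k$ if and only if there is a prime $q \mid |b|_N$ with $\nu_q(|b|_p) > \nu_q(|b|_N) - \nu_q(d)$.
\end{claim}

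The proof of this claim is the heart of the argument and where I would spend the most care. First I would note $|b|_p \mid |b|_N$ since $p \mid N$ forces $b^{|b|_N} \equiv 1 \pmod{p}$. The divisibility $|b|_p \mid k$ holds iff $\nu_q(|b|_p) \le \nu_q(k) = \nu_q(|b|_N) - \nu_q(d)$ for every prime $q$, where the last equality uses $k = |b|_N/d$ and $d \mid |b|_N$. Hence $|b|_p \nmid k$ is exactly the failure of this inequality for some prime $q$, i.e. $\nu_q(|b|_p) > \nu_q(|b|_N) - \nu_q(d)$ for some $q$; and any such $q$ must divide $|b|_N$ since otherwise $\nu_q(|b|_N) = 0 \ge \nu_q(|b|_p)$. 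This gives the claim and closes the equivalence.

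The main obstacle I anticipate is bookkeeping rather than deep ideas: I must be careful that the primes $p$ with $\nu_p(N) \le \nu_p(d)$ are harmless in \emph{both} formulations, so that restricting attention to primes with $\nu_p(N) > \nu_p(d)$ is legitimate, and I must correctly align Theorem 1's condition "$p \mid (b^k-1, N)$" with the order condition $|b|_p \mid k$. One subtlety worth checking explicitly is that for primes $p \mid N$ with $\nu_p(N) \le \nu_p(d)$, Theorem 1 imposes no real constraint, matching the fact that statement (2) only quantifies over primes with $\nu_p(N) > \nu_p(d)$; verifying this compatibility is the one place where a sign or direction of inequality could easily be reversed.
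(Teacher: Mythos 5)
Your argument is correct: the paper offers no proof of this theorem, merely asserting it is a restatement of Theorem \ref{ppl2}, and your translation (negating the condition of Theorem \ref{ppl2} via $p \mid b^{k}-1 \iff \left\vert b\right\vert _{p}\mid k$, then unpacking $\left\vert b\right\vert _{p}\nmid k$ prime by prime using $\nu_{q}(k)=\nu_{q}(\left\vert b\right\vert _{N})-\nu_{q}(d)$ and $\left\vert b\right\vert _{p}\mid\left\vert b\right\vert _{N}$) is exactly the intended bookkeeping, valid since $(N,b)=1$ is a standing hypothesis. This matches the paper's (implicit) approach.
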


In \cite{trio} the authors prove the following theorem.

\begin{theorem}
\label{coro}Let $d_{1}$, $d_{2}$ be divisors of $\left\vert
b\right\vert _{N}$ and assume that $d_{1}\mid d_{2}$ and
$d_{1}\in\mathcal{M}_{b}(N)$, then $d_{2}\in\mathcal{M}_{b}(N)$.
\end{theorem}

It is easy to see that if $N$ is a prime number, then any divisor of
$|b|_N$ greater than $1$ is an element of $\mathcal{M}_{b}(N)$. In
the next section, we will study when a given composite number $N$
satisfies the above property. To do that and by the last theorem it
is important to know when a prime divisor of $|b|_N$ is in
$\mathcal{M}_{b}(N)$. It was characterized by the authors in
\cite[Corollary 5]{dirichletmidy}. We recall that result here.

\begin{theorem}[\cite{dirichletmidy},Corollary 5]
\label{co1}Let $N$ be a positive integer and let $q$ be a prime
divisor of $\left\vert b\right\vert _{N}$, then
$q\in\mathcal{M}_{b}(N)\ $ if and only if

\begin{enumerate}
\item If $\left(  N,\ q\right)  =1$, then $\nu_{q}(\left\vert b\right\vert
_{p})=\nu_{q}\left(  \left\vert b\right\vert _{N}\right)  $ for all
$p$ prime divisor of $N$.

\item If $\left(  N,\ q\right)  >1$, then $q^{2}$ not divides $N$ and
$\ \nu_{q}(\left\vert b\right\vert _{p})=\nu_{q}\left(  \left\vert
b\right\vert _{N}\right)  $ for all $p$ prime divisor of $N$
different from $q$.
\end{enumerate}
\end{theorem}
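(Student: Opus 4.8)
The plan is to deduce Theorem \ref{co1} from the order-theoretic reformulation in Theorem \ref{ppl3}, specialized to the case where the divisor is a single prime $d=q$. Before invoking it I would record two elementary facts about orders that carry the whole argument. First, if $p$ is any prime divisor of $N$, then a congruence modulo $N$ is a congruence modulo $p$, so $|b|_p$ divides $|b|_N$ and hence $\nu_r(|b|_p)\le\nu_r(|b|_N)$ for every prime $r$. Second, since the order of $b$ modulo a prime $q$ divides $q-1$, we have $q\nmid|b|_q$, that is $\nu_q(|b|_q)=0$; this last fact is what ultimately forbids $q^2\mid N$.

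Next I apply Theorem \ref{ppl3} with $d=q$, noting that $\nu_r(q)=0$ for every prime $r\neq q$ while $\nu_q(q)=1$. The theorem then asserts that $q\in\mathcal{M}_b(N)$ iff for each prime $p\mid N$ with $\nu_p(N)>\nu_p(q)$ there is a prime $q'$ dividing $|b|_N$ with $\nu_{q'}(|b|_p)>\nu_{q'}(|b|_N)-\nu_{q'}(q)$. The key simplification is that only the choice $q'=q$ can ever fulfil this inequality: for any admissible $q'\neq q$ the right-hand side equals $\nu_{q'}(|b|_N)$, and the first recorded fact makes the strict inequality impossible. Taking $q'=q$ (which is admissible since $q\mid|b|_N$ by hypothesis), the inequality becomes $\nu_q(|b|_p)>\nu_q(|b|_N)-1$, which together with $\nu_q(|b|_p)\le\nu_q(|b|_N)$ is equivalent to the equality $\nu_q(|b|_p)=\nu_q(|b|_N)$. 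So the Midy condition collapses to: for every prime $p\mid N$ with $\nu_p(N)>\nu_p(q)$ one has $\nu_q(|b|_p)=\nu_q(|b|_N)$.

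I would then finish by splitting on whether $q\mid N$. If $(N,q)=1$, every prime $p\mid N$ satisfies $p\neq q$, hence $\nu_p(q)=0<\nu_p(N)$, so all such $p$ are relevant and the condition reads exactly as statement (1). If $q\mid N$, the relevant primes $p\neq q$ again give $\nu_q(|b|_p)=\nu_q(|b|_N)$, the second requirement of statement (2); and the prime $p=q$ is relevant precisely when $\nu_q(N)>1$, i.e.\ when $q^2\mid N$. In that event the forced equality $\nu_q(|b|_q)=\nu_q(|b|_N)$ is impossible, since its left side is $0$ while its right side is at least $1$ because $q\mid|b|_N$. Hence membership forces $q^2\nmid N$, which is precisely the remaining part of statement (2).

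The hard part will be the case $p=q$ inside the condition $q\mid N$: one must see that $q\mid|b|_N$ combined with $\nu_q(|b|_q)=0$ makes the relevant defect strictly positive, and this is exactly what excludes a squared prime factor $q^2\mid N$. Everything else is bookkeeping with the valuations $\nu_r$ and with the reduction to the single candidate $q'=q$. The one point I would verify carefully is the interplay between the hypothesis that $q$ is a prime divisor of $|b|_N$ and the subcase $q\mid N$, so that $q$ is genuinely an admissible choice of $q'$ in Theorem \ref{ppl3} and so that $\nu_q(|b|_N)\ge 1$ throughout.
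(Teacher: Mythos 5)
Your proposal is correct. Note, however, that the paper does not prove Theorem \ref{co1} at all: it is imported verbatim as Corollary 5 of \cite{dirichletmidy}, so there is no in-paper argument to compare against. Your derivation from Theorem \ref{ppl3} with $d=q$ is a sound, self-contained substitute. The two pivotal reductions both check out: since $\left\vert b\right\vert _{p}$ divides $\left\vert b\right\vert _{N}$, the witness prime $q'$ in Theorem \ref{ppl3} can only be $q$ itself, and the surviving inequality $\nu_{q}(\left\vert b\right\vert _{p})>\nu_{q}(\left\vert b\right\vert _{N})-1$ is equivalent to the equality $\nu_{q}(\left\vert b\right\vert _{p})=\nu_{q}(\left\vert b\right\vert _{N})$. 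The case split on $q\mid N$ is also handled correctly: for $p=q$ you use that $\left\vert b\right\vert _{q}$ divides $q-1$ (legitimate because $(N,b)=1$ is a standing hypothesis, so $(b,q)=1$), hence $\nu_{q}(\left\vert b\right\vert _{q})=0<\nu_{q}(\left\vert b\right\vert _{N})$, which rules out $q^{2}\mid N$ exactly as statement (2) requires. The only thing worth adding if you wrote this up formally is a one-line justification that Theorem \ref{ppl3} is indeed equivalent to Theorem \ref{ppl2} (the paper asserts this without proof as well), since your argument leans entirely on the \ref{ppl3} formulation.
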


\section{Midy pseudoprimes}

Pomerance and Crandall in their book \cite{prime}, state that:

\begin{quotation}
Suppose we have a theorem, \textquotedblleft\textit{If
}$\mathit{n}$\textit{
is prime, then S is true about }$\mathit{n}$\textit{,}\textquotedblright%
\ where \textquotedblleft S\textquotedblright\ is some easily
checkable arithmetic statement. If we are presented with a large
number $n$, and we wish to decide whether $n$ is prime or composite,
we may very well try out the arithmetic statement S and see whether
it actually holds for $n$. If the statement fails, we have proved
the theorem that $n$ is composite. If the statement holds, however,
it may be that $n$ is prime, and it also may be that $n$ is
composite. So we have the notion of S-pseudoprime, which is a
composite integer for which S holds.
\end{quotation}

Applying the above commentary, to the Fermat's little theorem the
concepts of pseudoprime and strong pseudoprime are given as follows

\begin{definition}
The composite integer $N$ is called a pseudoprime (or Fermat
pseudoprime) to
base $b$ if $\left(  b, N\right)  =1$ and $b^{N-1}\equiv1\ \operatorname{mod}%
\ N$. An integer which is pseudoprime for all possible bases $b$ is
called a Carmichael number or an absolute pseudoprime. An odd
composite $N$ such that $N-1=2^{r}s$ with $s$ an odd integer and
$\left(  b,\ N\right)  =1$, is said to be a strong pseudoprime to
base $b$~ if either $b^{s}\equiv 1\ \operatorname{mod}\ N$ or
$b^{2^{i}s}\equiv-1\ \operatorname{mod}\ N$, for some $0< i<r$.
\end{definition}

\begin{proposition}
\label{prop13} An odd composite integer $N$ is a strong pseudoprime
to base $b$ if and only if $N$ is pseudoprime to base $b$ and there
is a non-negative integer $k$ such that $\nu_{2}\left(  \left\vert
b\right\vert _{p^{\nu _{p}\left(  N\right)  }}\right)
=\nu_{2}\left(  \left\vert b\right\vert _{p }\right)  =k$ for all
prime $p$ divisor of $N$.
\end{proposition}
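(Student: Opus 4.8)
The plan is to reduce everything, via the Chinese Remainder Theorem, to the cyclic groups $\mathbb{U}_{p^{e}}$ and to track only the $2$-adic valuation of the relevant orders. Write $N-1=2^{r}s$ with $s$ odd. For each prime $p\mid N$ put $e_{p}=\nu_{p}(N)$ and $d_{p}=|b|_{p^{e_{p}}}$, and decompose $d_{p}=2^{a_{p}}m_{p}$ with $m_{p}$ odd, so that $a_{p}=\nu_{2}(d_{p})$. Since $p$ is odd, $\mathbb{U}_{p^{e_{p}}}$ is cyclic, hence $-1$ is its unique element of order $2$; this is the fact that makes the congruences below clean.

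First I would translate the two clauses of the definition into conditions on $a_{p}$ and $m_{p}$. Working in the cyclic group $\mathbb{U}_{p^{e_{p}}}$: the congruence $b^{s}\equiv 1\ (\mathrm{mod}\ p^{e_{p}})$ holds iff $d_{p}\mid s$, i.e. iff $a_{p}=0$ and $m_{p}\mid s$; and $b^{2^{i}s}\equiv -1\ (\mathrm{mod}\ p^{e_{p}})$ holds iff $b^{2^{i}s}$ has order $2$, i.e. iff $d_{p}\mid 2^{i+1}s$ but $d_{p}\nmid 2^{i}s$, which after computing $\gcd(d_{p},2^{i}s)$ is equivalent to $m_{p}\mid s$ together with $a_{p}=i+1$. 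Assembling these over all $p$ by CRT: $b^{s}\equiv 1\ (\mathrm{mod}\ N)$ iff $m_{p}\mid s$ and $a_{p}=0$ for every $p$, while $b^{2^{i}s}\equiv -1\ (\mathrm{mod}\ N)$ iff $m_{p}\mid s$ for every $p$ and $a_{p}=i+1$ for every $p$. In particular the latter forces all the $a_{p}$ to coincide. Reading the $b^{s}\equiv 1$ case as the value $k=0$, I would conclude that $N$ is a strong pseudoprime iff $m_{p}\mid s$ for all $p$ and the $a_{p}$ share a common value $k$.

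Next I would bring in the pseudoprimality condition. Since $b^{N-1}\equiv 1\ (\mathrm{mod}\ N)$ is equivalent to $d_{p}\mid 2^{r}s$ for all $p$, i.e. to $m_{p}\mid s$ and $a_{p}\le r$ for all $p$, the clause ``$m_{p}\mid s$ for all $p$'' is exactly half of being a pseudoprime; and if all $a_{p}$ equal a common $k$, then the constraint $k=a_{p}\le r$ needed for the index $i=k-1$ to be admissible is automatic once $N$ is a pseudoprime. Hence $N$ is a strong pseudoprime iff $N$ is a pseudoprime and the valuations $a_{p}=\nu_{2}(d_{p})=\nu_{2}(|b|_{p^{e_{p}}})$ are all equal.

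It remains to replace $\nu_{2}(|b|_{p^{e_{p}}})$ by $\nu_{2}(|b|_{p})$, and this is the step carrying the real content. I would prove the lemma that for an odd prime $p$ and any $e\ge 1$ one has $\nu_{2}(|b|_{p^{e}})=\nu_{2}(|b|_{p})$: the reduction $\mathbb{U}_{p^{e}}\to\mathbb{U}_{p}$ shows $|b|_{p}\mid |b|_{p^{e}}$, and the quotient $|b|_{p^{e}}/|b|_{p}$ is the order of $b^{|b|_{p}}$, an element of the kernel of that reduction and hence of a $p$-group; so the quotient is a power of $p$, which is odd, leaving $\nu_{2}$ unchanged. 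Substituting gives $a_{p}=\nu_{2}(|b|_{p})$, and the condition becomes $\nu_{2}(|b|_{p^{\nu_{p}(N)}})=\nu_{2}(|b|_{p})=k$ for all $p$, as stated. The main obstacle is the bookkeeping in the second clause, namely correctly identifying ``$b^{2^{i}s}\equiv -1$'' with the single equation $a_{p}=i+1$ and checking that the admissible range of $i$ matches the range of the common valuation $k$ under the pseudoprime hypothesis, rather than the lemma, which is routine.
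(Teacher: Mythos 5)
Your proof is correct and follows essentially the same route as the paper's: localize at each prime power $p^{\nu_p(N)}$, use that $-1$ is the unique element of order $2$ in the cyclic group $\mathbb{U}_{p^{e}}$, and match the common $2$-adic valuation $k$ of the local orders to the index $i=k-1$ (with $k=0$ giving the $b^{s}\equiv 1$ case). Your writeup is in fact more complete than the paper's, which leaves the converse direction to the reader and never justifies the identity $\nu_{2}\left(\left\vert b\right\vert_{p^{e}}\right)=\nu_{2}\left(\left\vert b\right\vert_{p}\right)$ that you prove via the $p$-group kernel of the reduction map.
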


\begin{proof}
Let $N-1=2^{t}s$. Assume that $N$ is pseudoprime to base $b$ and
\linebreak$\nu_{2}\left(  \left\vert b\right\vert _{p^{\nu_{p}\left(
N\right)  }}\right)  =k$, for some non-negative integer $k$ and for
any prime divisor $p$ of $N$. If $k=0$, it follows that $\left\vert
b\right\vert _{N}$
is odd and as $b^{N-1}\equiv1\ \operatorname{mod}\ N$, then $b^{s}%
\equiv1\ \operatorname{mod}\ N$. If $k>0$, let $\left\vert
b\right\vert
_{p^{\nu_{p}\left(  N\right)  }}=2^{k}s_{p}$, then $b^{2^{k-1}s_{p}}%
\equiv-1\ \operatorname{mod}\ p^{\nu_{p}\left(  N\right)  }$ and
thus $b^{2^{k-1}s}\equiv-1\ \operatorname{mod}\ N$ for each prime
divisor $p$ of $N$. Therefore, in any case we obtain that $N$ is a
strong pseudoprime to base $b$. The reciprocal can be prove in a
similar way.
\end{proof}

The smallest absolute pseudoprime is $561$ and in general such
numbers are square-free and product of at least three primes, Alford
et al. in \cite{Carmichael} proved that there are infinitely many
absolute pseudoprimes.

Theorem \ref{ppl2} implies that if $N$ is prime then $N$ verifies
the Midy's property for any base $b$ and for all divisor $d$,
different from $1$, of $\left\vert b\right\vert _{N}$, this fact and
the commentary quoted from Pomerance and Crandall leave us to study
``Midy pseudoprimes" and we will dedicate the rest of this work to
do it.

\begin{theorem}
\label{seudo}If $N$ is a positive integer such that for all $d>1$
and divisor of $\left\vert b\right\vert _{N}$ is satisfied that
$d\in\mathcal{M}_{b}(N)$, then $\left(  N,\ \left\vert b\right\vert
_{N}\right)  $ is either $1$ or a prime.
\end{theorem}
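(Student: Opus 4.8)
The plan is to set $g=\left(N,\ \left\vert b\right\vert_{N}\right)$ and, assuming $g>1$, to prove that $g$ is prime by controlling its prime factors through Theorem \ref{co1}. First I would observe that any prime $q$ dividing $g$ divides both $N$ and $\left\vert b\right\vert_{N}$; in particular $q$ is a divisor of $\left\vert b\right\vert_{N}$ greater than $1$, so the hypothesis on $N$ forces $q\in\mathcal{M}_{b}(N)$. Since $q\mid N$ we have $\left(N,\ q\right)>1$, so $q$ falls under the second alternative of Theorem \ref{co1}.

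From that alternative I immediately read off two facts about every prime $q\mid g$: that $q^{2}\nmid N$, and that $\nu_{q}\left(\left\vert b\right\vert_{p}\right)=\nu_{q}\left(\left\vert b\right\vert_{N}\right)$ for every prime divisor $p$ of $N$ with $p\neq q$. The first fact gives $\nu_{q}(g)=1$, so $g$ is squarefree; it then remains only to show that $g$ cannot have two distinct prime factors.

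For the main step I would argue by contradiction: suppose $q_{1}$ and $q_{2}$ are distinct primes dividing $g$. Applying the valuation equality for $q_{1}$ with the choice $p=q_{2}$ (legitimate because $q_{2}\mid N$ and $q_{2}\neq q_{1}$) gives $\nu_{q_{1}}\left(\left\vert b\right\vert_{q_{2}}\right)=\nu_{q_{1}}\left(\left\vert b\right\vert_{N}\right)\geq 1$, the inequality holding since $q_{1}\mid\left\vert b\right\vert_{N}$; hence $q_{1}\mid\left\vert b\right\vert_{q_{2}}$. Because $q_{2}$ is prime, Lagrange's theorem in $\mathbb{U}_{q_{2}}$ yields $\left\vert b\right\vert_{q_{2}}\mid q_{2}-1$, so $q_{1}\mid q_{2}-1$ and therefore $q_{1}<q_{2}$. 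The same computation with the roles of $q_{1}$ and $q_{2}$ interchanged gives $q_{2}<q_{1}$, a contradiction. Thus $g$ has at most one prime factor, and being squarefree it is either $1$ or that single prime.

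I expect the main obstacle to be bookkeeping rather than depth: one must correctly recognize that every prime dividing $\left(N,\ \left\vert b\right\vert_{N}\right)$ lands in case (2) of Theorem \ref{co1}, and then extract from the valuation identity the mutual divisibilities $q_{1}\mid q_{2}-1$ and $q_{2}\mid q_{1}-1$, whose incompatibility closes the argument.
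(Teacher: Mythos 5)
Your proof is correct, and it takes a genuinely different route from the paper's. The paper works with Theorem \ref{ppl2} directly: it takes two primes $p_{1}<p_{2}$ dividing $\left(N,\left\vert b\right\vert_{N}\right)$, sets $d=p_{2}$ and $k=\left\vert b\right\vert_{N}/p_{2}$, argues that $\left\vert b\right\vert_{p_{1}}\mid k$ (via the factorization $\left\vert b\right\vert_{N}=p_{1}p_{2}\left[\left\vert b\right\vert_{p_{1}},\left\vert b\right\vert_{p_{2}}\right]r$), so that $p_{1}\mid\left(N,b^{k}-1\right)$, and then derives the contradiction $1\leq\nu_{p_{1}}(N)\leq\nu_{p_{1}}(p_{2})=0$ from the Midy condition. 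You instead route everything through case (2) of Theorem \ref{co1}: the valuation identity $\nu_{q_{1}}\left(\left\vert b\right\vert_{q_{2}}\right)=\nu_{q_{1}}\left(\left\vert b\right\vert_{N}\right)\geq 1$ gives $q_{1}\mid\left\vert b\right\vert_{q_{2}}\mid q_{2}-1$, and the symmetric statement yields the impossible pair $q_{1}<q_{2}<q_{1}$. Your version has two modest advantages: it makes the squarefreeness of the gcd an explicit consequence of $q^{2}\nmid N$ rather than a closing remark, and it sidesteps the lcm factorization of $\left\vert b\right\vert_{N}$, which is the one step of the paper's proof that requires some care to justify (one must check that the $p_{2}$-adic valuation of $\left\vert b\right\vert_{N}$ really exceeds that of $\left[\left\vert b\right\vert_{p_{1}},\left\vert b\right\vert_{p_{2}}\right]$). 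The paper's version, in exchange, only needs the coarser Theorem \ref{ppl2} rather than the finer prime-divisor characterization of Theorem \ref{co1}.
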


\begin{proof}
Let $p_{1}<$ $p_{2}$ be prime divisors of $\left(  N, \left\vert
b\right\vert _{N}\right)  $. Write $N=p_{1}p_{2}N_{1}$ for some
integer $N_{1}$, therefore $\left\vert b\right\vert
_{N}=p_{1}p_{2}\left[  \left\vert b\right\vert _{p_{1}}, \left\vert
b\right\vert _{p_{2}}\right]  r$; with $r$ an integer. Let
$d=p_{2}$, $\left\vert b\right\vert _{N}=p_{2}k$, because
$\left\vert b\right\vert _{p_{1}}$ divides $k$ it follows that
$p_{1}\mid\left(  N,
b^{k}-1\right)  $ and since $d\in\mathcal{M}_{b}(N)$ we have $\nu_{p_{1}%
}\left(  N\right)  \leq\nu_{p_{1}}\left(  d\right)  $, which is a
contradiction as $1<\nu_{p_{1}}\left(  N\right)  $ and
$\nu_{p_{1}}\left( d\right)  =\nu_{p_{1}}\left(  p_{2}\right)  =0$.
It is clear that $p^{2}$ not divides $\left(  N, \left\vert
b\right\vert _{N}\right)  $.
\end{proof}

\begin{theorem}
\label{pseudomidy}Let $N$ be a positive integer, then
$d\in\mathcal{M}_{b}(N)$ for all divisor $d>1$ of $\left\vert
b\right\vert _{N}$, if and only if

\begin{enumerate}
\item If $\left(  N, \left\vert b\right\vert _{N}\right)  =1$, then
$N=p_{1}^{e_{1}}p_{2}^{e_{2}}\cdots p_{l}^{e_{l}}$ with each $p_{i}$
prime and $\left\vert b\right\vert _{N}=\left\vert b\right\vert
_{p_{i}}$ for $i=1,\cdots,l$.

\item If $\left(  N,\left\vert b\right\vert _{N}\right)  =r$ prime, then
$N=rp_{1}^{e_{1}}p_{2}^{e_{2}}\ldots p_{l}^{e_{l}}$ with each
$p_{i}$ prime
and $\left\vert b\right\vert _{N}=\left\vert b\right\vert _{p_{i}}%
=r^{s}\left\vert b\right\vert _{r}$ for $i=1,\ldots,l$ with $s$ a
positive integer.
\end{enumerate}
\end{theorem}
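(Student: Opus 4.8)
The plan is to prove both directions by reducing everything to Theorem~\ref{ppl2}, which says $d\in\mathcal{M}_b(N)$ iff $\nu_p(N)\le\nu_p(d)$ for every prime $p$ dividing $(b^k-1,N)$ where $|b|_N=kd$. The hypothesis ``$d\in\mathcal{M}_b(N)$ for \emph{all} divisors $d>1$ of $|b|_N$'' is a very strong uniform condition, and the key observation is that it forces the local orders $|b|_{p_i}$ to all equal $|b|_N$ (up to the prime-power factor from $(N,|b|_N)$). By Theorem~\ref{seudo} we already know $(N,|b|_N)$ is either $1$ or a single prime $r$, so the two cases of the statement are exactly the two cases of that dichotomy, and I would organize the proof that way.

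For the forward direction in case (1), where $(N,|b|_N)=1$: writing $N=\prod p_i^{e_i}$, I would show that $|b|_{p_i}=|b|_N$ for each $i$. The strategy is to suppose some $|b|_{p_i}$ is a proper divisor of $|b|_N$ and derive a contradiction. I would pick a prime $\ell$ and choose $d$ to be $|b|_N$ with one prime factor stripped out so that $|b|_{p_i}$ still divides the complementary $k=|b|_N/d$; then $p_i\mid(b^k-1,N)$ while $\nu_{p_i}(d)<\nu_{p_i}(N)$ since $(N,|b|_N)=1$ makes $\nu_{p_i}(d)=0$, violating Theorem~\ref{ppl2}. This mirrors exactly the argument already used in the proof of Theorem~\ref{seudo}. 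Conversely, if every $|b|_{p_i}=|b|_N$, then for any prime $p\mid(b^k-1,N)$ we have $|b|_p\mid k$, but $|b|_p=|b|_N=kd$ forces $d=1$, so no prime $p$ with $d>1$ can divide $(b^k-1,N)$ and the Midy condition holds vacuously.

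For case (2), where $(N,|b|_N)=r$ is prime: here $r\mid N$ and $r\mid|b|_N$, so $r$ is the one prime where the order can differ from the global order by the $r$-part. Write $N=r^a\prod p_i^{e_i}$; Theorem~\ref{seudo} already tells us $r^2\nmid(N,|b|_N)$, but I must pin down that $a=1$ and that $|b|_{p_i}=|b|_N=r^s|b|_r$. The mechanism is the same: for each prime $p_i\ne r$, choosing $d$ to kill a factor while keeping $|b|_{p_i}\mid k$ would force $\nu_{p_i}(N)\le\nu_{p_i}(d)=0$ unless $|b|_{p_i}=|b|_N$. The role of $r$ is handled by Theorem~\ref{co1}: since $r$ itself is a prime divisor of $|b|_N$ and $r\in\mathcal{M}_b(N)$, its part~(2) gives $r^2\nmid N$ (so $a=1$) and $\nu_r(|b|_p)=\nu_r(|b|_N)$ for all primes $p\ne r$, which combined with $|b|_p=|b|_N$ yields the stated form $|b|_{p_i}=r^s|b|_r$. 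The converse is again a verification that the displayed order conditions make $(b^k-1,N)$ coprime to the relevant primes whenever $d>1$.

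The main obstacle I anticipate is the bookkeeping with the prime $r$ in case (2): I must carefully separate the contribution of $r$ (an element of both $N$ and $|b|_N$) from the other primes $p_i$, and it is Theorem~\ref{co1}, not Theorem~\ref{ppl2} alone, that controls whether the prime divisor $r$ of $|b|_N$ lies in $\mathcal{M}_b(N)$. Getting the exponent relation $|b|_N=r^s|b|_r$ to come out correctly—rather than merely $\nu_r(|b|_{p_i})=\nu_r(|b|_N)$—requires combining the global equality $|b|_{p_i}=|b|_N$ with the structure of $r$ as the unique common prime, and ensuring $s\ge1$ is genuinely forced by $r\mid|b|_N$.
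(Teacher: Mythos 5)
Your overall architecture matches the paper's: the dichotomy comes from Theorem~\ref{seudo}, the primes $p_i$ coprime to $|b|_N$ are handled by a stripping argument via Theorem~\ref{ppl2} (equivalently, by Theorem~\ref{co1} after reducing to prime $d$ via Theorem~\ref{coro}), and the converse is a verification that $(b^k-1,N)$ meets no offending prime. Case~(1) and the claim $|b|_{p_i}=|b|_N$ for $p_i\neq r$ in case~(2) are fine as you describe them.

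There is, however, a genuine gap exactly where you suspect one: the derivation of $|b|_N=r^s|b|_r$ in case~(2). The mechanism you cite --- part~(2) of Theorem~\ref{co1} applied to the prime $q=r$, giving $\nu_r(|b|_p)=\nu_r(|b|_N)$ for $p\neq r$ --- cannot produce this conclusion. That statement is already implied by the equality $|b|_p=|b|_N$ you have independently established, and, crucially, part~(2) explicitly excludes $p=r$ from its conclusion, so it gives no control whatsoever over $|b|_r$. What must be shown is that $|b|_N/|b|_r$ is a \emph{pure power of} $r$, i.e.\ that $\nu_q(|b|_r)=\nu_q(|b|_N)$ for every prime $q\neq r$ dividing $|b|_N$. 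The paper gets this by writing $|b|_N=r^s|b|_r h$ with $(h,r)=1$ and applying part~(1) of Theorem~\ref{co1} to each prime $q\mid h$ (legitimate since $q\neq r$ and $(N,|b|_N)=r$ force $(N,q)=1$), then specializing its conclusion to the prime divisor $p=r$ of $N$: this yields $\nu_q(|b|_r)=\nu_q(|b|_N)=\nu_q(|b|_r)+\nu_q(h)$, hence $h=1$. Finally $s\geq 1$ is not automatic from ``$r\mid|b|_N$'' alone as you phrase it; it follows because $|b|_r\mid r-1$ forces $\nu_r(|b|_r)=0$, so $s=\nu_r(|b|_N)\geq 1$. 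With that one step repaired, your plan coincides with the paper's proof.
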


\begin{proof}
From Theorem \ref{coro}, it is clear that $d\in\mathcal{M}_{b}(N)$
for all divisor $d>1$ of $\left\vert b\right\vert _{N}$ if and only
if $q\in \mathcal{M}_{b}(N)$ for each prime divisor $q$ of
$\left\vert b\right\vert _{N}$. The part (1) is immediate from the
first case of Theorem \ref{co1}.

To prove the second part, assume that $d\in\mathcal{M}_{b}(N)$ for
all divisor $d>1$ of $\left\vert b\right\vert _{N}$. From Theorem
\ref{seudo} we have $r=\left(  N,\left\vert b\right\vert _{N}\right)
$ for some prime $r$. Take
$N=rN_{1}$ with $N_{1}$ an integer and $\left\vert b\right\vert _{N}%
=r^{s}\left\vert b\right\vert _{r}h$ where $(h,r)=1$. We will prove
that $h=1$. If there is a prime divisor $q$ of $h,$ from Theorem
\ref{co1} follows that $\nu_{q}\left(  \left\vert b\right\vert
_{N}\right) =\nu _{q}\left(  \left\vert b\right\vert _{p}\right)  $
for all prime divisor $p$ of $N,$ particulary to $p=r$ we obtain
$\nu_{q}\left(  \left\vert b\right\vert _{r}\right)  =\nu_{q}\left(
\left\vert b\right\vert _{N}\right)  =\nu _{q}\left(  \left\vert
b\right\vert _{r}\right)  +\nu_{q}\left(  h\right)  $ and hence
$\nu_{q}\left(  h\right)  =0$. Thus $h=1.$

Let $p\neq r$ a prime which divides $N$, we will see that
$\left\vert b\right\vert _{N}=\left\vert b\right\vert _{p}.$ Write
$\left\vert b\right\vert _{N}=\left\vert b\right\vert _{p}H$ with
$H$ an integer. If $p$ is a divisor of $H$, then $p$ divides
$\left\vert b\right\vert _{N}$ and consequently $p$ divides $\left(
N\text{, }\left\vert b\right\vert _{N}\right)  $ which is absurd
because $p\neq r$. Suppose that there exists a prime $q$ different
from $p$ and divisor of $H$, so $\left\vert b\right\vert
_{N}=\left\vert b\right\vert _{p}H=\left\vert b\right\vert
_{p}H_{1}q=qk$ and as, by the assumption, $q\in\mathcal{M}_{b}(N)$,
it leaves us to a contradiction from Theorem \ref{ppl2} because $p$
is a divisor of $\left( N,\text{ }b^{k}-1\right)  $. So $H=1$ and
$\left\vert b\right\vert _{p}=\left\vert b\right\vert
_{N}=r^{s}\left\vert b\right\vert _{r}.$ Therefore
$N=rp_{1}^{e_{1}}p_{2}^{e_{2}}\cdots p_{l}^{e_{l}}$ with each
$p_{i}$ prime and also $\left\vert b\right\vert _{N}=\left\vert
b\right\vert _{p_{i}}=r^{s}\left\vert b\right\vert _{r}$.

Conversely, assume that $N=rp_{1}^{e_{1}}p_{2}^{e_{2}}\cdots
p_{l}^{e_{l}}$, $\left(  N, \left\vert b\right\vert _{N}\right)  =r$
and $\left\vert b\right\vert _{N}=\left\vert b\right\vert
_{p_{i}}=r^{s}\left\vert b\right\vert _{r}$. Take $d>1$ a divisor of
$\left\vert b\right\vert _{N}$, $\left\vert b\right\vert _{N}=kd$
and let $g=\left(  N\text{, }b^{k}-1\right) $. Since $\left\vert
b\right\vert _{p_{i}}=\left\vert b\right\vert _{N}=kd$ for each $1
\leq i\leq l$, we obtain that $\left\vert b\right\vert _{p_{i}}$ is
not a divisor of $k$ therefore $p_{i}$ does not divide $g$. Thus
either $g=1$ or $g=r$. In any case, by Theorem \ref{ppl2}, $N$ has
the Midy's property for $b$ and $d$.
\end{proof}

\begin{definition}
We say that a number $N$ is a Midy pseudoprime to base $b$ if $N$ is
an odd composite number relatively prime to both $b$ and $\left\vert
b\right\vert _{N}$ and for all divisor $d>1$ \ of $\left\vert
b\right\vert _{N}$ we get that $d\in\mathcal{M}_{b}(N)$.
\end{definition}

By this definition the first part of Theorem \ref{pseudomidy} can be
write in the following way.

\begin{theorem}
\label{seudomidy}An odd composite number
$N=p_{1}^{e_{1}}p_{2}^{e_{2}}\ldots p_{l}^{e_{l}}$, with $p_{i}$'s
different primes and $N$ relatively prime to $b$, is a Midy
pseudoprime to base $b$ if and only if $\left\vert b\right\vert
_{N}=\left\vert b\right\vert _{p_{i}}$ for every $1\leq i \leq l$.
\end{theorem}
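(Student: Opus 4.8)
The plan is to deduce the statement directly from part (1) of Theorem \ref{pseudomidy}, since the standing hypotheses of Theorem \ref{seudomidy} — namely that $N$ is odd, composite, and coprime to $b$ — together with the defining coprimality requirement of a Midy pseudoprime place us squarely in the first case of that theorem. The only piece of genuine work is to reconcile the coprimality condition $(N,\left\vert b\right\vert _{N})=1$ built into the definition of a Midy pseudoprime with the order condition $\left\vert b\right\vert _{N}=\left\vert b\right\vert _{p_{i}}$ appearing in the present statement.

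First I would treat the forward implication. Assuming $N$ is a Midy pseudoprime to base $b$, the definition supplies both $(N,\left\vert b\right\vert _{N})=1$ and $d\in\mathcal{M}_{b}(N)$ for every divisor $d>1$ of $\left\vert b\right\vert _{N}$. The latter is exactly the left-hand side of the equivalence in Theorem \ref{pseudomidy}, and since $(N,\left\vert b\right\vert _{N})=1$ we are in case (1); hence $\left\vert b\right\vert _{N}=\left\vert b\right\vert _{p_{i}}$ for each $i$, as required.

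For the converse I would start from $\left\vert b\right\vert _{N}=\left\vert b\right\vert _{p_{i}}$ for all $i$ and first recover the coprimality $(N,\left\vert b\right\vert _{N})=1$ that the definition demands. Because $(N,b)=1$, each order $\left\vert b\right\vert _{p_{i}}$ divides $p_{i}-1$, so $p_{i}\nmid\left\vert b\right\vert _{p_{i}}=\left\vert b\right\vert _{N}$; as this holds for every prime factor $p_{i}$ of $N$, no $p_{i}$ divides $\left\vert b\right\vert _{N}$ and therefore $(N,\left\vert b\right\vert _{N})=1$. With this established, case (1) of Theorem \ref{pseudomidy} applies in the reverse direction and yields $d\in\mathcal{M}_{b}(N)$ for all divisors $d>1$ of $\left\vert b\right\vert _{N}$. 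Since $N$ is moreover odd, composite, and coprime to both $b$ and $\left\vert b\right\vert _{N}$, this is precisely the definition of a Midy pseudoprime.

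I expect the main (and essentially only) obstacle to be the small gap in the converse: the statement of Theorem \ref{seudomidy} records only the order equalities $\left\vert b\right\vert _{N}=\left\vert b\right\vert _{p_{i}}$, whereas invoking part (1) of Theorem \ref{pseudomidy} also requires $(N,\left\vert b\right\vert _{N})=1$. Supplying that coprimality via the elementary fact $\left\vert b\right\vert _{p_{i}}\mid p_{i}-1$ is what makes the two formulations coincide; everything else is a direct translation through the definition of a Midy pseudoprime.
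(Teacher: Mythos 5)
Your proposal is correct and follows essentially the same route as the paper, which offers no separate proof but simply presents Theorem \ref{seudomidy} as a rewriting of part (1) of Theorem \ref{pseudomidy} combined with the definition of a Midy pseudoprime. Your explicit derivation of $(N,\left\vert b\right\vert _{N})=1$ from $\left\vert b\right\vert _{p_{i}}\mid p_{i}-1$ fills in the one detail the paper leaves implicit, and it is correct.
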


V. Shevelev defines in \cite{overpseudoprimes} the concept of
overpseudoprime numbers and characterized them in Theorem 7. That
result is equivalent to our Theorem \ref{seudomidy}, so the concepts
of overpseudoprime and Midy's pseudoprime agree.

Theorem \ref{seudomidy} give us the following characterization.

\begin{corollary}
\label{masfuerte}An odd composite $N$ is a Midy pseudoprime to base
$b$ if and only if each divisor of $N$ is either a prime or a Midy
pseudoprime to base $b$.
\end{corollary}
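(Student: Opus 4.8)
The plan is to prove both directions using the clean characterization provided by Theorem \ref{seudomidy}, namely that an odd composite $N=p_1^{e_1}\cdots p_l^{e_l}$ relatively prime to $b$ is a Midy pseudoprime precisely when $\left\vert b\right\vert_N=\left\vert b\right\vert_{p_i}$ for every $i$. The essential arithmetic fact I would lean on throughout is that for any divisor $M$ of $N$, we have $\left\vert b\right\vert_M=\operatorname{lcm}_{p^{\nu}\parallel M}\left\vert b\right\vert_{p^{\nu}}$, and more crudely that $\left\vert b\right\vert_M$ divides $\left\vert b\right\vert_N$ whenever $M\mid N$. Note also that since $N$ is relatively prime to $\left\vert b\right\vert_N$ (part of being a Midy pseudoprime), and any divisor $M>1$ of $N$ has $\left\vert b\right\vert_M\mid\left\vert b\right\vert_N$, such an $M$ is automatically relatively prime to $\left\vert b\right\vert_M$, so the ``relatively prime to $\left\vert b\right\vert$'' hypothesis needed to invoke the definition is inherited by divisors for free.

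For the forward direction, I would assume $N$ is a Midy pseudoprime and take an arbitrary divisor $M>1$ of $N$. If $M$ is prime there is nothing to prove, so suppose $M$ is composite; I must show $M$ is itself a Midy pseudoprime to base $b$. By Theorem \ref{seudomidy} applied to $N$, every prime $p_i$ dividing $N$ satisfies $\left\vert b\right\vert_{p_i}=\left\vert b\right\vert_N$. Writing $M=p_{i_1}^{f_1}\cdots p_{i_m}^{f_m}$ with the $p_{i_j}$ among the primes of $N$, I would first observe that since all these primes share the common order $\left\vert b\right\vert_N$, their least common multiple is also $\left\vert b\right\vert_N$, but I need $\left\vert b\right\vert_M$ rather than the lcm of the prime-power orders. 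The key subtlety is that $\left\vert b\right\vert_{p_{i_j}^{f_j}}$ may exceed $\left\vert b\right\vert_{p_{i_j}}$. However, because $M\mid N$ forces $\left\vert b\right\vert_M\mid\left\vert b\right\vert_N=\left\vert b\right\vert_{p_{i_j}}$, and simultaneously $\left\vert b\right\vert_{p_{i_j}}\mid\left\vert b\right\vert_M$ (as $p_{i_j}\mid M$), I can conclude $\left\vert b\right\vert_M=\left\vert b\right\vert_{p_{i_j}}$ for each $j$, which is exactly the criterion of Theorem \ref{seudomidy} for $M$; hence $M$ is a Midy pseudoprime.

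For the converse, I would assume every divisor of $N$ is either prime or a Midy pseudoprime, and deduce that $N$ satisfies the criterion of Theorem \ref{seudomidy}. Fix any prime $p_i$ dividing $N$; I must show $\left\vert b\right\vert_{p_i}=\left\vert b\right\vert_N$. The natural strategy is to apply the hypothesis to a well-chosen divisor. I would consider, for two distinct prime factors $p_i,p_j$ of $N$, the divisor $M=p_ip_j$ (or more generally a two-prime divisor): by hypothesis $M$ is composite, hence a Midy pseudoprime, so Theorem \ref{seudomidy} gives $\left\vert b\right\vert_M=\left\vert b\right\vert_{p_i}=\left\vert b\right\vert_{p_j}$, forcing all prime factors of $N$ to share a common order, say $\delta$. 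It then remains to show $\left\vert b\right\vert_N=\delta$ as well, i.e.\ that no exponent $e_i>1$ inflates the order; this I would extract by applying the hypothesis to divisors of the form $p_i^{e_i}p_j$ and using that their being Midy pseudoprimes forces $\left\vert b\right\vert_{p_i^{e_i}}=\left\vert b\right\vert_{p_i}$, which globally yields $\left\vert b\right\vert_N=\operatorname{lcm}_i\left\vert b\right\vert_{p_i^{e_i}}=\delta$.

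The main obstacle I anticipate is the bookkeeping around prime powers versus primes, i.e.\ controlling $\left\vert b\right\vert_{p^e}$ relative to $\left\vert b\right\vert_p$; this is where Theorem \ref{seudomidy}'s criterion (stated in terms of the prime orders $\left\vert b\right\vert_{p_i}$ rather than prime-power orders) does the real work, and I must make sure the divisibility chain $\left\vert b\right\vert_{p_i}\mid\left\vert b\right\vert_M\mid\left\vert b\right\vert_N$ is deployed so that the common value is forced at every level. A minor point requiring care is the degenerate case $l=1$ (a prime power $N=p^e$): here one must check the statement reduces correctly, since a prime power is a Midy pseudoprime iff $\left\vert b\right\vert_{p^e}=\left\vert b\right\vert_p$, and its only composite divisors are the lower prime powers $p^f$, to which the same equality must propagate.
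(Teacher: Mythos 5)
Your argument is correct and is essentially the derivation the paper intends: the corollary is presented as an immediate consequence of Theorem \ref{seudomidy}, and your forward direction --- sandwiching $\left\vert b\right\vert _{M}$ via $\left\vert b\right\vert _{p_{i_j}}\mid\left\vert b\right\vert _{M}\mid\left\vert b\right\vert _{N}=\left\vert b\right\vert _{p_{i_j}}$ for a composite divisor $M$ --- is exactly the right use of that criterion. Note only that your converse is far more elaborate than necessary: $N$ is itself a composite divisor of $N$, so the hypothesis applied to the divisor $M=N$ already says $N$ is a Midy pseudoprime, and the $p_ip_j$ and $p_i^{e_i}p_j$ bookkeeping (together with the worry about the prime-power case $l=1$) can be discarded entirely.
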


The bellow result, Theorem 2.3 of \cite{Motose2}, allows us to give
a equivalent form of Theorem \ref{seudomidy}. We denote with
$\Phi_{n}\left( x\right)  $ the $n$-th cyclotomic polynomial.

\begin{theorem}
[Theorem 2.3 of \cite{Motose2}]\label{motose}Let $m$, $b\geq2$,
$n\geq3$ and $p$ be integers, where $p$ is the greatest prime
divisor of $n$. Then a composite number $m$ is a divisor of
$\Phi_{n}\left(  b\right)  $ if and only if $b^{n}\equiv1\
\operatorname{mod}\ m$ and every prime divisor $q$ of $m$ satisfies
that
\[
n=\left\{
\begin{array}
[c]{ccc}%
\left\vert b\right\vert _{q} &  & \text{if }q\neq p,\\
&  & \\
p^{e}\left\vert b\right\vert _{p} &  & \text{if }q=p.
\end{array}
\right.
\]

\end{theorem}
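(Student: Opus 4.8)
The plan is to reduce everything to the factorization $b^{n}-1=\prod_{d\mid n}\Phi_{d}(b)$ together with the classical description of the prime divisors of a cyclotomic value: if $q$ is a prime not dividing $b$ and $d=\left\vert b\right\vert _{q}$, then $q\mid\Phi_{n}(b)$ exactly when $n/d$ is a (possibly trivial) power of $q$, i.e. $n=d$ or $n=d\,q^{j}$ with $j\geq1$, and in the second case $\nu_{q}(\Phi_{n}(b))=1$. I would first record the elementary consequence that, since $d=\left\vert b\right\vert _{q}$ divides $q-1$, one has $d<q$; hence whenever $n=d\,q^{j}$ with $j\geq1$ every prime factor of $n$ other than $q$ already divides $d<q$, so $q$ is forced to be the greatest prime divisor $p$ of $n$, and then $j=\nu_{p}(n)=:e$ because $p\nmid d$. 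This single observation is what separates the role of $p$ from that of the other primes.

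For the forward implication I would take a composite divisor $m$ of $\Phi_{n}(b)$. Then $m\mid b^{n}-1$, which is precisely $b^{n}\equiv1\ \operatorname{mod}\ m$. For each prime $q\mid m$ we have $q\mid\Phi_{n}(b)$, so the lemma yields $n=\left\vert b\right\vert _{q}q^{j_{q}}$; if $j_{q}=0$ then $n=\left\vert b\right\vert _{q}$, while if $j_{q}\geq1$ the remark above forces $q=p$ and $n=p^{e}\left\vert b\right\vert _{p}$. This is exactly the stated dichotomy, so the necessity of the conditions is immediate.

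The converse is the part that needs care, and I would treat it prime by prime, proving $q^{\nu_{q}(m)}\mid\Phi_{n}(b)$ for every $q\mid m$ and multiplying. The hypothesis $b^{n}\equiv1\ \operatorname{mod}\ m$ gives $\nu_{q}(m)\leq\nu_{q}(b^{n}-1)$, so it suffices to compare $\nu_{q}(b^{n}-1)$ with $\nu_{q}(\Phi_{n}(b))$. For a prime $q\neq p$ the hypothesis $n=\left\vert b\right\vert _{q}$ means that among the factors $\Phi_{d}(b)$ with $d\mid n$ only $\Phi_{n}(b)$ is divisible by $q$ — any other would require $d=\left\vert b\right\vert _{q}q^{j}=nq^{j}>n$ — whence $\nu_{q}(b^{n}-1)=\nu_{q}(\Phi_{n}(b))$ and the desired bound is automatic. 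The delicate case, and the step I expect to be the main obstacle, is $q=p$: here $n=p^{e}\left\vert b\right\vert _{p}$ with $e\geq1$ and $p$ divides several cyclotomic factors $\Phi_{\left\vert b\right\vert _{p}p^{i}}(b)$, $0\leq i\leq e$. A lifting-the-exponent computation shows $\nu_{p}(\Phi_{n}(b))=1$ whereas $\nu_{p}(b^{n}-1)$ is strictly larger, so the inequality coming merely from $b^{n}\equiv1\ \operatorname{mod}\ m$ does not suffice; the argument must use that the single factor of $p$ carried by $\Phi_{n}(b)$ already accounts for all of $\nu_{p}(m)$. Reconciling the multiplicity of $p$ permitted in $m$ with the fact that $\Phi_{n}(b)$ is only singly divisible by $p$ is where the genuine content of the theorem sits, and it is this $p$-adic bookkeeping that I would expect to occupy the bulk of the proof.
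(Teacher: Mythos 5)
The paper offers no proof of this statement --- it is imported verbatim as Theorem 2.3 of \cite{Motose2} --- so there is no in-paper argument to compare with; your proposal has to stand on its own. Up to the point you yourself flag, it does: the classical lemma on prime divisors of $\Phi_{n}(b)$, the observation that $\left\vert b\right\vert _{q}<q$ forces $q=p$ and $j=\nu_{p}(n)$ whenever $n\neq\left\vert b\right\vert _{q}$, the entire forward implication, and the converse for primes $q\neq p$ (and for $q=p$ when $e=0$) are all correct and complete.

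The gap is the case $q=p$, $e\geq1$ of the converse, which you defer as ``$p$-adic bookkeeping'' still to be carried out. That step cannot be carried out: nothing in the hypotheses bounds $\nu_{p}(m)$, while $\nu_{p}\left(\Phi_{n}(b)\right)=1$ in that case, so the ``if'' direction fails whenever $p^{2}\mid m$. Concretely, take $b=2$ and $n=6$, so $p=3$, $\left\vert 2\right\vert _{3}=2$ and $n=3^{1}\cdot\left\vert 2\right\vert _{3}$; the composite number $m=9$ satisfies $2^{6}=64\equiv1\ \operatorname{mod}\ 9$ and the displayed order condition, yet $\Phi_{6}(2)=3$ is not divisible by $9$. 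So the statement as quoted needs an extra hypothesis such as $p^{2}\nmid m$ (equivalently, $\nu_{p}(m)\leq1$ when $e\geq1$), under which your argument closes immediately since then $\nu_{p}(m)\leq1=\nu_{p}\left(\Phi_{n}(b)\right)$. Note that the paper only ever applies the theorem with $\left(N,\left\vert b\right\vert _{N}\right)=1$, where $p\nmid m$ and the problematic case never arises. You correctly located the crux, but the missing step is not a computation you omitted --- it is a counterexample to the theorem as literally stated.
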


The following result is a consequence of Theorems \ref{seudomidy}
and \ref{motose}.

\begin{theorem}
A composite number $N$ with $\left(  N,\left\vert b\right\vert
_{N}\right) =1,$ is a Midy pseudoprime to base $b$ if and only if
$\Phi_{\left\vert b\right\vert _{N}}\left(  b\right)  \equiv0\
\operatorname{mod}\ N$ and $\left\vert b\right\vert _{N}>1$.
\end{theorem}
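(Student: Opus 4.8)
The plan is to read both implications directly off Theorem \ref{seudomidy} and Theorem \ref{motose}, letting the coprimality hypothesis $(N,|b|_N)=1$ collapse the two branches of Motose's dichotomy into a single uniform condition. Throughout I would write $n=|b|_N$, so that $b^{n}\equiv 1\pmod N$ holds by the very definition of the order, and I would note at the outset that $N\mid\Phi_n(b)$ automatically gives $N\mid b^{n}-1$ and hence $(N,b)=1$. The key preliminary observation is this: since $(N,n)=1$, no prime divisor of $N$ divides $n$, and in particular the greatest prime divisor $p$ of $n$ does not divide $N$. Consequently, when Theorem \ref{motose} is applied with $m=N$, \emph{every} prime divisor $q$ of $N$ lands in the branch $q\neq p$, so the dichotomy there reduces to the single requirement $n=|b|_q$ for all primes $q\mid N$. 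Comparing with Theorem \ref{seudomidy}, this is precisely the statement that $|b|_N=|b|_{p_i}$ for every prime $p_i\mid N$, i.e.\ that $N$ is a Midy pseudoprime. Thus, once the hypotheses of Theorem \ref{motose} are in force, the equivalence is immediate.

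It then remains to line up the ancillary requirements in each direction. For the forward implication, a Midy pseudoprime is odd, composite and coprime to $b$; Theorem \ref{seudomidy} supplies $n=|b|_{p_i}$ for all $i$, and feeding this into the ``if'' part of Theorem \ref{motose} (legitimate because $N$ is composite and $b^{n}\equiv 1\pmod N$) yields $N\mid\Phi_n(b)$. For the converse I would \emph{deduce} the missing structural facts: from $N\mid\Phi_n(b)\mid b^{n}-1$ one gets $(N,b)=1$, while oddness must be argued rather than assumed. Indeed, if $2\mid N$ then $b$ is odd, $|b|_2=1$, and the reduced Motose condition would force $n=|b|_2=1$, contradicting $n>1$; hence $N$ is odd. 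With $N$ odd, composite and coprime to $n$, the condition $n=|b|_q$ for all $q\mid N$ extracted from the ``only if'' part of Theorem \ref{motose} feeds back into Theorem \ref{seudomidy} to conclude that $N$ is a Midy pseudoprime.

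The hard part will be the hypothesis management at the boundary of Motose's theorem, which is stated only for $n\geq 3$; the cases $n=1$ and $n=2$ have to be treated by hand. The case $n=1$ is the degenerate situation $N\mid b-1$: it is ruled out on the right by the clause $n>1$, and on the left it is excluded by the non-degeneracy of the period (a Midy pseudoprime has $\mathcal{M}_b(N)\neq\varnothing$, hence $|b|_N>1$), so the two sides remain consistent. The case $n=2$, where $\Phi_2(b)=b+1$, I would verify directly: if $N$ is odd with $|b|_N=2$ then $N\mid b^{2}-1$ while $N\nmid b-1$, and for each prime power $p_i^{e_i}\parallel N$ the condition $|b|_{p_i}=2$ forces $p_i\mid b+1$, so $\gcd(p_i^{e_i},b-1)=1$ gives $p_i^{e_i}\mid b+1$ and therefore $N\mid\Phi_2(b)$; the reverse direction is the same computation read backwards. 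Once these low-order cases are dispatched, the coprimality-driven collapse of the dichotomy reduces the general statement to a straightforward translation between Theorem \ref{seudomidy} and Theorem \ref{motose}, and I expect no further difficulty.
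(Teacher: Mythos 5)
Your proof is correct and follows the same route the paper intends: the paper gives no written proof, stating only that the result is ``a consequence of Theorems \ref{seudomidy} and \ref{motose},'' and your argument is exactly that deduction, with the coprimality hypothesis $(N,\left\vert b\right\vert _{N})=1$ collapsing Motose's dichotomy to the single condition $\left\vert b\right\vert _{N}=\left\vert b\right\vert _{q}$ for all primes $q\mid N$. Your explicit treatment of the low-order cases $\left\vert b\right\vert _{N}\in\{1,2\}$, which fall outside the stated range $n\geq 3$ of Theorem \ref{motose}, is a welcome extra bit of care that the paper omits.
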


Theorem 1 of \cite{seudoprimos_to_25} shows the subsequent result
for strong pseudoprimes. We present here a more wide version which
is a direct consequence of Theorems \ref{seudomidy} and
\ref{motose}.

\begin{theorem}
Let $N>2$ and $f_{N}\left(  b\right)  =\frac{\Phi_{N}\left(
b\right) }{\left(  N,\ \Phi_{N}\left(  b\right)  \right)  }$. If
$f_{N}\left( b\right)  $ is composite, then $f_{N}\left(  b\right)
$ is a Midy pseudoprime to base $b$.
\end{theorem}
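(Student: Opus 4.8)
The plan is to set $M=f_{N}(b)$ and verify that $M$ meets the three conditions of Theorem \ref{seudomidy}: that $M$ is odd composite and coprime to $b$, and that $\left\vert b\right\vert _{M}=\left\vert b\right\vert _{q}$ for every prime divisor $q$ of $M$. Compositeness is given by hypothesis, so the heart of the matter is controlling the orders of $b$ modulo the prime divisors of $M$. Since $M$ divides $\Phi_{N}(b)$ and $\Phi_{N}(b)$ divides $b^{N}-1$, we have $M\mid b^{N}-1$; because $N>2$, so $N\geq3$, we may apply Theorem \ref{motose} with $n=N$ and $m=M$.

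First I would invoke Theorem \ref{motose} for the composite divisor $M$ of $\Phi_{N}(b)$. Writing $p$ for the greatest prime divisor of $N$, the theorem tells us that every prime divisor $q$ of $M$ satisfies $\left\vert b\right\vert _{q}=N$ whenever $q\neq p$, while $q=p$ would force $N=p^{e}\left\vert b\right\vert _{p}$ with $e\geq1$. The key step is then to show that $p$ does not in fact divide $M$, so that this exceptional case never occurs. Here I would use the classical fact that the only prime capable of dividing both $N$ and $\Phi_{N}(b)$ is its greatest prime divisor $p$, and that when $p\mid\Phi_{N}(b)$ it divides $\Phi_{N}(b)$ only to the first power. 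Consequently $\left(N,\Phi_{N}(b)\right)$ equals either $1$ or $p$, and in either case $\nu_{p}\left(\Phi_{N}(b)\right)-\nu_{p}\left(\left(N,\Phi_{N}(b)\right)\right)=0$; hence $p\nmid M$. This dispatches the exceptional branch of Theorem \ref{motose}, leaving $\left\vert b\right\vert _{q}=N$ for every prime divisor $q$ of $M$.

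With this in hand the remaining verifications are routine. For each prime power $q^{a}$ dividing $M$ the order $\left\vert b\right\vert _{q^{a}}$ is a multiple of $\left\vert b\right\vert _{q}=N$ and, since $q^{a}\mid M\mid b^{N}-1$, also divides $N$; thus $\left\vert b\right\vert _{q^{a}}=N$, and taking the least common multiple over all such prime powers gives $\left\vert b\right\vert _{M}=N=\left\vert b\right\vert _{q}$. Coprimality of $M$ with $b$ follows because each prime $q\mid M$ has $\left\vert b\right\vert _{q}=N\geq3$, which is impossible if $q\mid b$; and $M$ is odd because a divisor $q=2$ would require $\left\vert b\right\vert _{2}=N\geq3$, which fails since $\left\vert b\right\vert _{2}=1$ when $b$ is odd, while if $b$ is even then $b^{N}-1$ is odd and $2\nmid M$. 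Having established all the hypotheses, I would close by applying Theorem \ref{seudomidy} to conclude that $M=f_{N}(b)$ is a Midy pseudoprime to base $b$.

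I expect the main obstacle to be the justification that $p\nmid M$: it is exactly here that the normalization by $\left(N,\Phi_{N}(b)\right)$ does its work, and it relies on the less elementary input that the intrinsic prime $p$ can occur in $\Phi_{N}(b)$ only to the first power. Everything else is a direct bookkeeping of orders feeding into Theorem \ref{seudomidy}.
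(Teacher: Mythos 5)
Your proposal is correct and follows exactly the route the paper indicates (it gives no written proof, only the remark that the result is a direct consequence of Theorems \ref{seudomidy} and \ref{motose}). You have rightly identified and justified the one point that remark leaves implicit, namely that for $N>2$ the greatest prime divisor $p$ of $N$ can divide $\Phi_{N}(b)$ at most to the first power, so that dividing by $\left(N,\Phi_{N}(b)\right)$ removes the exceptional branch of Theorem \ref{motose} and leaves $\left\vert b\right\vert _{q}=N$ for every prime $q$ dividing $f_{N}(b)$.
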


Our next result extends Theorem 3.5.10 of \cite{prime} .

\begin{theorem}
Let $p$ be an odd prime and $1<b<p-1$, then $N=\frac{b^{p}+1}{b+1}$
is either a Midy pseudoprime to base $b$ or a prime.
\end{theorem}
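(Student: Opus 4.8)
The plan is to assume $N$ is composite and verify the characterization of Theorem \ref{seudomidy}, namely that $\lvert b\rvert_N=\lvert b\rvert_q$ for every prime divisor $q$ of $N$; the alternative, that $N$ is prime, requires nothing. Since $p$ is odd, $b+1$ divides $b^p+1$, so $N=b^{p-1}-b^{p-2}+\cdots-b+1$ is a genuine integer and the defining relation $(b+1)N=b^p+1$ yields the single most useful fact, $b^p\equiv-1\pmod N$. From this $(N,b)=1$ and $b^{2p}\equiv1\pmod N$, so $\lvert b\rvert_N$ divides $2p$; moreover $\lvert b\rvert_N\nmid p$, for otherwise $b^p\equiv1\pmod N$ would force $N\mid 2$, impossible once one checks that $N$ is odd and larger than $1$ (if $b$ is even every term of $N$ but the last is even, and if $b$ is odd then $N\equiv p\equiv1\pmod2$). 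Hence $\lvert b\rvert_N\in\{2,2p\}$.

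First I would pin down the order modulo each prime divisor. Let $q\mid N$. Reducing $(b+1)N=b^p+1$ gives $b^p\equiv-1\pmod q$, so exactly as before $\lvert b\rvert_q$ divides $2p$ but not $p$, leaving $\lvert b\rvert_q\in\{2,2p\}$. The crux is to exclude $\lvert b\rvert_q=2$, that is $q\mid b^2-1$. If $q\mid b-1$, substituting $b\equiv1$ into the alternating sum defining $N$ gives $N\equiv1\pmod q$, contradicting $q\mid N$. If $q\mid b+1$, substituting $b\equiv-1$ gives $N\equiv p\pmod q$ (the $p$ terms all become $+1$ since $p-1$ is even), so $q\mid N$ forces $q=p$; but the hypothesis $b<p-1$ means $0<b+1<p$, whence $p\nmid b+1$, a contradiction. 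Therefore $\lvert b\rvert_q=2p$ for every prime $q\mid N$.

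Combining the two computations, $2p=\lvert b\rvert_q\mid\lvert b\rvert_N\mid 2p$, so in fact $\lvert b\rvert_N=2p=\lvert b\rvert_q$ for every prime divisor $q$ of $N$, which is precisely the hypothesis of Theorem \ref{seudomidy}. It then remains only to confirm the standing assumptions of that theorem and of the definition of a Midy pseudoprime: $N$ is odd and $(N,b)=1$ (both already noted), and $N$ is coprime to $\lvert b\rvert_N=2p$. Oddness gives $(N,2)=1$, and $p\nmid N$ follows from Fermat's little theorem, since $p\mid N$ would give $-1\equiv b^p\equiv b\pmod p$, i.e. $p\mid b+1$, again impossible under $b<p-1$. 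Thus if $N$ is composite it is a Midy pseudoprime to base $b$, and otherwise it is prime.

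I expect the main obstacle to be the exclusion of $\lvert b\rvert_q=2$, and in particular the subcase $q\mid b+1$: this is the only place where the precise bound $b<p-1$ is used, and indeed if $b=p-1$ one checks that $p\mid N$ with $\lvert b\rvert_p=2$, so the conclusion genuinely fails without the hypothesis. Everything else reduces to routine manipulation of the sum defining $N$ and of the order of $b$.
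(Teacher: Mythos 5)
Your proof is correct, but it takes a genuinely different route from the paper's. The paper recognizes $N=\frac{b^{p}+1}{b+1}=\Phi_{p}(-b)=\Phi_{2p}(b)$ via the identity $\Phi_{2n}(x)=\Phi_{n}(-x)$ for odd $n$, observes that $N$ is odd and $\equiv 1\ \operatorname{mod}\ p$, hence $\left(2p,\Phi_{2p}(b)\right)=1$, and then invokes the immediately preceding theorem (that $f_{n}(b)=\Phi_{n}(b)/\left(n,\Phi_{n}(b)\right)$ is a Midy pseudoprime when composite), which itself rests on Motose's characterization of composite divisors of cyclotomic values. You instead verify the criterion of Theorem \ref{seudomidy} directly: from $b^{p}\equiv-1\ \operatorname{mod}\ N$ you pin $\left\vert b\right\vert_{N}$ and each $\left\vert b\right\vert_{q}$ inside $\{2,2p\}$, and you eliminate the value $2$ by reducing the alternating sum defining $N$ modulo $q$ in the two cases $q\mid b-1$ and $q\mid b+1$. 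In effect you re-prove the relevant special case ($n=2p$) of Motose's theorem from scratch, which makes your argument longer but entirely self-contained and elementary. A genuine advantage of your version is that it isolates exactly where the hypothesis $1<b<p-1$ enters (ruling out $p\mid b+1$, hence both $q=p$ in the case $q\mid b+1$ and $p\mid N$), and your observation that the statement genuinely fails at $b=p-1$ is a worthwhile sanity check that the paper does not record; the paper's corresponding step is the terse assertion that $N\equiv 1\ \operatorname{mod}\ p$, which silently uses the same bound. Both proofs are sound; the paper's buys brevity from machinery already developed, yours buys transparency.
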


\begin{proof}
It is well known that $n$ odd implies that $\Phi_{2n}\left(
b\right) =\Phi_{n}\left(  -b\right)  $ and from here
$N=\frac{b^{p}+1}{b+1}=\Phi _{p}\left(  -b\right)  =\Phi_{2p}\left(
b\right)  $. In consequence, $N$ is odd and congruent with $1\
\operatorname{mod}\ p$. Therefore, $\left( 2p,\Phi_{2p}\left(
b\right)  \right)  =1$ and the result follows from the last theorem.
\end{proof}

The set of bases of Midy pseudoprimality is closed respect to
powers, although it is not closed by product as we can see when take
$N=91$ which is Midy pseudoprime to bases $9$ and $16$ but it is not
to $53$, their product modulo $N$.

\begin{theorem}
If $N$ is a Midy pseudoprime to base $b$, then $N$ is Midy
pseudoprime to base $b^{t}$ for any positive integer $t\geq1$.
\end{theorem}

\begin{proof}
The result is immediate from Theorem \ref{seudomidy}, since $N$ is
Midy pseudoprime to base $b$ so $\left\vert b\right\vert
_{N}=\left\vert b\right\vert _{p}$ for each prime divisor $p$ of
$N.$ Now $\left\vert b^{t}\right\vert _{N}=\frac{\left\vert
b\right\vert _{N}}{\left(
t,\ \left\vert b\right\vert _{N}\right)  }=\frac{\left\vert b\right\vert _{p}%
}{\left(  t,\ \left\vert b\right\vert _{p}\right)  }=$ $\left\vert
b^{t}\right\vert _{p}$. It shows that $N$ is a Midy pseudoprime to
base $b^{t}$.
\end{proof}

\begin{theorem}
If $N$ is a Midy pseudoprime to base $b$, then $N$ is a pseudoprime
to base $b$.
\end{theorem}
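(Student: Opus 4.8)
The plan is to reduce the statement to the divisibility $\left\vert b\right\vert_{N}\mid N-1$, which is precisely the condition for $N$ to be a Fermat pseudoprime to base $b$, and then to extract that divisibility from the order equalities furnished by Theorem \ref{seudomidy}. First I would record what the hypothesis gives us directly: since $N$ is a Midy pseudoprime to base $b$, it is an odd composite number with $(b,N)=1$, and writing $N=p_{1}^{e_{1}}p_{2}^{e_{2}}\cdots p_{l}^{e_{l}}$ with the $p_{i}$ distinct primes, Theorem \ref{seudomidy} yields $\left\vert b\right\vert_{N}=\left\vert b\right\vert_{p_{i}}$ for every $1\leq i\leq l$.

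The crux of the argument is to convert these order equalities into a congruence on the primes $p_{i}$. For each $i$, Fermat's little theorem gives $\left\vert b\right\vert_{p_{i}}\mid p_{i}-1$, so the equality $\left\vert b\right\vert_{N}=\left\vert b\right\vert_{p_{i}}$ forces $\left\vert b\right\vert_{N}\mid p_{i}-1$, that is, $p_{i}\equiv1\pmod{\left\vert b\right\vert_{N}}$. Since this holds for each prime factor, the product $N=\prod_{i=1}^{l}p_{i}^{e_{i}}$ is a product of factors each congruent to $1$ modulo $\left\vert b\right\vert_{N}$, whence $N\equiv1\pmod{\left\vert b\right\vert_{N}}$ and therefore $\left\vert b\right\vert_{N}\mid N-1$.

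Finally, because $\left\vert b\right\vert_{N}$ is by definition the order of $b$ in $\mathbb{U}_{N}$, the divisibility $\left\vert b\right\vert_{N}\mid N-1$ immediately gives $b^{N-1}\equiv1\pmod{N}$. Together with the facts that $N$ is composite and $(b,N)=1$, this is exactly the definition of a pseudoprime to base $b$, completing the proof. The only step requiring any thought is the passage from the order equalities $\left\vert b\right\vert_{N}=\left\vert b\right\vert_{p_{i}}$ to $p_{i}\equiv1\pmod{\left\vert b\right\vert_{N}}$ via Fermat's little theorem; once that observation is made, the remaining congruence manipulation is routine.
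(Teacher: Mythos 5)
Your proof is correct. Both you and the paper start from the same key input: Theorem \ref{seudomidy} gives $\left\vert b\right\vert_{N}=\left\vert b\right\vert_{p_{i}}$ for each prime factor $p_{i}$, and combining this with Fermat's little theorem yields $\left\vert b\right\vert_{N}\mid p_{i}-1$. Where you diverge is in the final assembly. You argue globally: each $p_{i}^{e_{i}}\equiv 1\pmod{\left\vert b\right\vert_{N}}$, hence $N\equiv 1\pmod{\left\vert b\right\vert_{N}}$, hence $\left\vert b\right\vert_{N}\mid N-1$ and $b^{N-1}\equiv 1\pmod{N}$ follows directly from the definition of the order. The paper instead argues locally in the style of the Korselt criterion: from $t\mid p_{j}-1$ it derives $b^{p_{j}}\equiv b\pmod{p_{i}^{e_{i}}}$, iterates to get $b^{p_{j}^{e_{j}}}\equiv b\pmod{p_{i}^{e_{i}}}$, concludes $b^{N}\equiv b\pmod{p_{i}^{e_{i}}}$ for every $i$, and glues these congruences by the Chinese Remainder Theorem. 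Your version is slightly more economical (one divisibility statement about $\left\vert b\right\vert_{N}$ replaces the prime-power-by-prime-power bookkeeping), while the paper's version makes visible the stronger conclusion $b^{N}\equiv b\pmod{N}$ without needing to invoke the order modulo $N$ at the last step. Both are complete and correct.
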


\begin{proof}
Write $N=p_{1}^{e_{1}}p_{2}^{e_{2}}\ldots p_{l}^{e_{l}}$ and assume
that $N$ is a Midy pseudoprime to base $b$. From Theorem
\ref{seudomidy} follows $\left\vert b\right\vert _{N}=\left\vert
b\right\vert _{p_{i}}=$ $\left\vert b\right\vert _{p_{i}^{e_{i}}}=t$
for each $i=1,2,\ldots,l$. By the assumption
we get that $t\mid p_{i}-1$ for each $i=1,2,\ldots,l$ and thus $b^{p_{j}%
-1}\equiv1\ \operatorname{mod}\ p_{i}^{e_{i}}$ for all pair $i,j$.
So, $b^{p_{j}}\equiv b\ \operatorname{mod}\ p_{i}^{e_{i}}$ and from
here $b^{p_{j}^{e_{j}}}\equiv b\ \operatorname{mod}\ p_{i}^{e_{i}}$
and, consequently, $b^{p_{1}^{e_{1}}p_{2}^{e_{2}}\ldots
p_{l}^{e_{l}}}\equiv b\ \operatorname{mod}\ p_{i}^{e_{i}}$, namely
$b^{N}\equiv b\ \operatorname{mod}\ p_{i}^{e_{i}}$ for each $i$ and
therefore $b^{N}\equiv b\ \operatorname{mod}\ N$.
\end{proof}

\begin{theorem}
\label{midypseudoprimeisstrong}If $N$ is a Midy pseudoprime to base
$b$, then $N$ is a strong pseudoprime to base $b$.
\end{theorem}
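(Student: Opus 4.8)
The plan is to invoke Proposition~\ref{prop13}, which reduces the claim to two facts: that $N$ is a pseudoprime to base $b$, and that the quantity $\nu_{2}(\left\vert b\right\vert _{p^{\nu_{p}(N)}})$ agrees with $\nu_{2}(\left\vert b\right\vert _{p})$ and equals one and the same non-negative integer $k$ for every prime divisor $p$ of $N$. The first fact is exactly the statement of the previous theorem, so it can be quoted directly; the real work lies entirely in establishing the second, uniform-valuation condition.

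Write $N=p_{1}^{e_{1}}p_{2}^{e_{2}}\cdots p_{l}^{e_{l}}$. First I would record the divisibility chain $\left\vert b\right\vert _{p_{i}}\mid\left\vert b\right\vert _{p_{i}^{e_{i}}}\mid\left\vert b\right\vert _{N}$, valid because the order of $b$ modulo any divisor of $N$ divides its order modulo $N$. Since $N$ is a Midy pseudoprime, Theorem~\ref{seudomidy} gives $\left\vert b\right\vert _{N}=\left\vert b\right\vert _{p_{i}}$ for each $i$, so the chain collapses and $\left\vert b\right\vert _{p_{i}}=\left\vert b\right\vert _{p_{i}^{e_{i}}}=\left\vert b\right\vert _{N}=:t$ for all $i$. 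This is precisely the prime-power order identity already used in the proof that a Midy pseudoprime is a pseudoprime.

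Applying $\nu_{2}$ to these equalities, and noting that $p_{i}^{e_{i}}=p_{i}^{\nu_{p_{i}}(N)}$, I obtain $\nu_{2}(\left\vert b\right\vert _{p_{i}^{\nu_{p_{i}}(N)}})=\nu_{2}(\left\vert b\right\vert _{p_{i}})=\nu_{2}(t)$ for every $i$. Thus the common value $k:=\nu_{2}(t)=\nu_{2}(\left\vert b\right\vert _{N})$ serves uniformly for all prime divisors of $N$, which is exactly the hypothesis of Proposition~\ref{prop13}. Combining this with the oddness and compositeness of $N$ (built into the definition of Midy pseudoprime) and with the pseudoprimality just quoted, Proposition~\ref{prop13} yields at once that $N$ is a strong pseudoprime to base $b$.

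I do not anticipate a genuine obstacle: every ingredient has been prepared in the earlier results, and the argument amounts to the observation that the single order $t=\left\vert b\right\vert _{N}$ simultaneously controls all the local orders $\left\vert b\right\vert _{p_{i}^{e_{i}}}$, so the $2$-adic valuation condition characterizing strong pseudoprimes holds trivially with a constant $k$. The only point deserving care is to cite the prime-power equality $\left\vert b\right\vert _{p_{i}}=\left\vert b\right\vert _{p_{i}^{e_{i}}}$ rather than merely $\left\vert b\right\vert _{p_{i}}=\left\vert b\right\vert _{N}$, because Proposition~\ref{prop13} refers specifically to orders modulo $p^{\nu_{p}(N)}$.
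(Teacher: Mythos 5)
Your proposal is correct and follows essentially the same route as the paper: both reduce the claim to Proposition~\ref{prop13} by quoting the pseudoprimality result and then using Theorem~\ref{seudomidy} to force all the local orders $\left\vert b\right\vert _{p_i^{e_i}}$ to coincide with $\left\vert b\right\vert _N$, so that the $2$-adic valuations are trivially constant. Your explicit divisibility chain $\left\vert b\right\vert _{p_i}\mid\left\vert b\right\vert _{p_i^{e_i}}\mid\left\vert b\right\vert _N$ is just a slightly more careful spelling-out of the step the paper states as "$\left\vert b\right\vert _{N}=\left\vert b\right\vert _{n}$ for each divisor $n$ of $N$".
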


\begin{proof}
Write $N=p_{1}^{e_{1}}p_{2}^{e_{2}}\ldots p_{l}^{e_{l}}$ and assume
that $N$ is a Midy pseudoprime to base $b$. We know that $N$ is a
pseudoprime to base $b$. Since $N$ is a Midy pseudoprime to base $b$
implies that $\left\vert b\right\vert _{N}=\left\vert b\right\vert
_{n}$ for each divisor $n$ of $N$ and thus there is a non-negative
integer $k$ such that for all prime divisor $p$ of $N$ we get that
$\nu_{2}\left(  \left\vert b\right\vert _{p^{\nu _{p}\left(
N\right)  }}\right)  =k$ and the result follows from Proposition
\ref{prop13}.
\end{proof}

The reciprocal is not true. For example $N=91$ is a strong
pseudoprime to base $53$, but this is not a Midy pseudoprime to this
base.

Additionally, from the last theorem and Corollary \ref{masfuerte} we
get that every composite divisor of a Midy pseudoprime is a strong
pseudoprime, in this sense the Midy pseudoprimes are stronger than
strong pseudoprimes.

Among the first $58892$ strong pseudoprimes to base $2$ there are
only $31520$ Midy pseudoprimes to base $2$. Similarly, to base $3$
there are  $2558$ Midy pseudoprimes in the first $6087$ strong
pseudoprimes  and we found $582$ Midy pseudoprimes to base $5$ in
the first $1288$ strong pseudoprime to base $5$. Almost the $47\%$
of the strong pseudoprimes are Midy pseudoprimes.

We denote with $\psi_{k}$ and $\widetilde{\psi}_{k}$ the smallest
strong pseudoprime and the smallest Midy pseudoprime to all the
first $k$ primes taken as bases, respectively. From Theorem
\ref{midypseudoprimeisstrong} we know that
$\psi_{k}\leq\widetilde{\psi}_{k}$ for every positive integer $k$.
With some calculations, we can see that $\widetilde{\psi}_{1}=2047$,
$\widetilde{\psi}_{2}=5173601$ and $\widetilde{\psi}_{3}=960946321$.
We know, by \cite{Jaeschke}, the exact values for $\psi_k$, with
$1\leq k\leq 8$. Thus, $\widetilde{\psi}_{4}>3215031751=\psi_{4}$,
$\widetilde{\psi}_{5}>2152302898747=\psi_{5}$, $
\widetilde{\psi}_{6}>3474749660383=\psi_{6}$, $\widetilde{\psi}_{7}>341550071728321=\psi_{7}$ and $\widetilde{\psi}_{8}%
>341550071728321=\psi_{8}$.

\section*{Acknowledgements}
The authors are members of the research group: \'Algebra, Teor\'ia
de N\'umeros y Aplicaciones, ERM. J.H. Castillo was partially
supported by CAPES, CNPq from Brazil and Universidad de Nariño from
Colombia. J.M. Velásquez-Soto was partially supported by CONICET
from Argentina and Universidad del Valle from Colombia.

\bibliographystyle{amsalpha}
\bibliography{bibliografiaggp}

\end{document}